\theoremstyle{thmstyleone}%
\newtheorem{thm}{Theorem}
\newtheorem{lem}[thm]{Lemma}
\newtheorem{defn}{Definition}
\newcommand{\eq}{\begin{equation}}
\newcommand{\nq}{\end{equation}}
\newcommand\eqa{\begin{eqnarray}}
\newcommand\nqa{\end{eqnarray}}
\newcommand\bess{\begin{eqnarray*}}
\newcommand\eess{\end{eqnarray*}}
\newcommand{\nn}{\nonumber}
\newcommand{\ALG}{}
\begin{document}

\title[Article Title]{Maximum Shortest Path Interdiction Problem by Upgrading Nodes on Trees under Unit Cost }

\author[1]{Qiao Zhang \footnote{The work of Q. Zhang was supported by National Natural Science Foundation of China (1230012046).}\orcidlink{0009-0004-4723-2017}}\email{qiaozhang@cczu.edu.cn}
\author[2]{Xiao Li \orcidlink{0009-0003-4222-6577}}\email{xli2576@wisc.edu}
\author*[3]{Xiucui Guan \orcidlink{0000-0002-2653-1868}}\email{xcguan@163.com}
\author[4,5]{Panos M. Pardalos \footnote{Research by P.M. Pardalos was prepared within the framework of the Basic Research Program
at the National Research University Higher School of Economics (HSE).}\orcidlink{0000-0001-9623-8053
}}\email{pardalos@ise.ufl.edu}

\affil[1]{Aliyun School of Big Data, Changzhou University, Changzhou 213164, Jiangsu, China}
\affil[2]{Department of Mathematics, University of Wisconsin, Madison, WI 53706, USA}
\affil*[3]{School of Mathematics, Southeast University, Nanjing 210096, Jiangsu, China}
\affil[4]{\orgdiv{Center for Applied Optimization, University of Florida}, 32611, Florida, USA}
\affil[5]{\orgdiv{Laboratory LATNA, HSE University, Russia}}
\abstract{Network interdiction problems by deleting critical nodes have wide applications. However, node deletion is not always feasible in certain practical scenarios. We consider the maximum shortest path interdiction problem by upgrading nodes on trees under unit cost (MSPIT-UN$_u$). It aims to upgrade a subset of nodes to maximize the length of the shortest root-leaf distance such that the total upgrade cost under unit cost is upper bounded by a given value. We develop a dynamic programming algorithm with a time complexity of $O(n^3)$ to solve this problem. Furthermore, we consider the related minimum cost problem of (MSPIT-UN$_u$) and propose a $O(n^3\log n)$ binary search algorithm, where a dynamic programming algorithm is exceeded in each iteration to solve its corresponding problem (MSPIT-UN$_u$). Finally, we design numerical experiments to show the effectiveness of the algorithms.  
}
\keywords{Network interdiction problem, Upgrading nodes,  Shortest path, Tree, Dynamic programming algorithm}

\maketitle

\section{Introduction}
Shortest path interdiction problems involving the strategic deletion of critical edges or nodes (denoted as \textbf{(SPIP-DE/N)}) have garnered significant attention in the research community over the past two decades. These problems have found extensive applications across diverse domains, including transportation networks \cite{r1}, military operations \cite{Kha}, and the disruption of terrorist networks \cite{r2, Kha}. The fundamental objective of the interdiction problem is to identify and remove $K$ critical edges/nodes from a network to make the performance of the shortest path from some points $u$ and $v$ as poor as possible.

The shortest path interdiction problems by deleting critical edges raise more attention.  Corley and Sha \cite{CorleyandSha} pioneered K-edge removal to maximize shortest path length, proven NP-hard by Ball et al. \cite{Ball}. Israeli et al. \cite{Israeli} proposed an enhanced Benders algorithm, outperforming classical methods. Khachiyan et al. \cite{Kha} established inapproximability for threshold variants and extended results to node interdiction. Chen \cite{chen} and Nardelli et al. \cite{Nardelli} achieved O($m + n\log n$) and O($m\alpha(m,1)$) complexities for K=1 on undirected graphs. Ayyildiz et al. \cite{Ayyildiz} modeled multi-sink planner-disruptor interactions, while Huang et al. \cite{Huang} introduced RL-based solutions for grids/random graphs. 

While relatively less attention have been given to the shortest path interdiction problems by deleting critical nodes in the literature. Lalou et al. \cite{Lalou} presented a comprehensive survey on critical node problems, where the objective is to maximize network dispersion by removing strategic nodes. Their work analyzed computational complexity and developed algorithms for critical node problems under various dispersion metrics. Magnouche and Martin \cite{MM} from Huawei Technologies France investigated the minimal node interdiction problem, where the goal is to remove the minimum number of nodes such that the shortest $s-t$ path length in the residual graph is at least $d$. They proved the NP-hardness of this problem, formulated an integer linear programming model with exponentially many constraints, and developed a branch-and-bound algorithm for its solution.

It is noteworthy that the majority of existing research on shortest path interdiction problems has predominantly focused on the complete disabling of critical edges or nodes. However, in many real-world applications, the complete removal or neutralization of nodes is often impractical or infeasible. A more realistic and operationally viable alternative is to partially degrade the operational capacity or functionality of certain critical nodes. A compelling example of this can be observed in the context of wildfire management, as demonstrated by the 2025 LA Fires case \cite{wildfire}. In such scenarios, while the primary objective is to contain and mitigate the spread of wildfires, completely eliminating all potential ignition nodes is neither feasible nor cost-effective. 
In real-world scenarios, forests typically encompass numerous high-risk zones that serve as potential fire spread points, including  high-risk areas with dry vegetation, lightning-prone ridges, or zones with significant human activity.
Safeguarding all of them to a level of zero vulnerability is often impossible due to resource constraints and environmental complexities.
Therefore, a more pragmatic strategy is to selectively reduce the vulnerability of certain critical nodes, thereby delaying the propagation of fire through the network.  This corresponds to reducing the risk level of certain nodes rather than completely fireproofing them (equivalent to setting their vulnerability to zero). Motivated by such practical constraints, we adopt the concept of upgrading nodes introduced in \cite{ZhangQ21} and adapt it to analyze shortest path interdiction problems on trees. This approach provides a more realistic and operationally feasible framework for addressing interdiction challenges under real-world limitations. By focusing on the strategic enhancement of node capabilities rather than their complete removal or neutralization, our methodology not only improves the applicability of interdiction strategies in practical scenarios but also paves the way for innovative research in network optimization and resource allocation. This perspective bridges the gap between theoretical models and real-world implementation, offering new insights into solving complex interdiction problems.

The maximum shortest path interdiction problem by upgrading nodes on trees (denoted by \textbf{(MSPIT-UN)}) can be defined as follows.
Let $T=(V, E, w)$ be an edge-weighed tree rooted at $v_1$, where $V=\{v_1, v_2, \cdots, v_n\}$ and  $E=\{e_1, e_2, \cdots, e_m\}$ are the sets of vertices and edges, respectively. Let $Y=\{t_1, t_2, \cdots, t_l\}$ be the set of leaves. Let $c(v)$ denote the cost associated with upgrading node $v\in V$. Let $A(v_i ) = \{e _j = (v_i, v_j )|e_ j \in E\}$  be the set of
edges adjacent to $v_i$. For each edge $e\in E$,  let $w(e)$  and $u(e)$ be the original length and the length after upgrading its forward node, respectively, where $w(e)\leq u(e)$. Denote by $\Delta{w(e)}:=u(e)-w(e)$ the deviation between $u(e)$ and $w(e)$.   Denote by $P_{k}:=P_{t_k}:=P_{v_1,t_k}$ the unique path from $v_1$ to $t_k$ on  $T$ and define $w(P_{k})$ as the shortest root-leaf distance from $v_1$ to $t_k$ under the length vector $w$.
   The problem \textbf{(MSPIT-UN)} aims to upgrade a subset $S \subseteq V$ of nodes to maximize the length of shortest root-leaf  distance
such that the total upgrade cost under some norm is upper bounded by a given value
$K$. Its mathematical model can be stated as follows.

\begin{eqnarray}
&\max\limits_{S\subseteq V}&\min_{t_k\in{Y}}\bar{w}(P_k)\nonumber\\
\textbf{(MSPIT-UN)}&s.t.& \sum_{v\in S}c(v)\leq K,\label{eq-MSPIT-UN}\\
&&\bar w(e)=\left\{\begin{array}{ll}
 \beta(e),   & v\in S,e\in A(v) \\
   w(e),  & otherwise
\end{array}\right..\nonumber
\end{eqnarray}
The relevant minimum cost problem \textbf{(MSPIT-UN)}, denoted by \textbf{(MCSPIT-UN)},
aims to upgrade a subset $S \subseteq V$ of nodes to minimize the total upgrade cost such that
the shortest root-leaf distance is upper bounded by a given value $D$. Its mathematical
model can be stated as follows.
\begin{eqnarray}
&\min\limits_{S\subseteq V}&\sum_{v\in S}c(v)\nonumber\\
\textbf{(MCSPIT-UN)}&s.t.& \min_{t_k\in{Y}}\bar{w}(P_k)\geq D,\label{eq-MCSPIT-UN}\\
&&\bar w(e)=\left\{\begin{array}{cc}
  u(e),   & v\in S,e\in A(v) \\
   w(e),  & otherwise
\end{array}\right..\nonumber
\end{eqnarray}
Zhang, Guan et al.\cite{ZhangQ20,ZhangQ21} researched on the  maximum shortest path interdiction problem by upgrading edges on trees (denoted by \textbf{(MSPIT-UE)}). 
Under  weighted $ l_1 $ norm,  they\cite{ZhangQ20} proposed a primal-dual algorithms  in $ O(n^2) $ time for the problems \textbf{(MSPIT$_1$-UE)} and \textbf{(MCSPIT$_1$-UE)}. Under  weighted Hamming distance, they\cite{ZhangQ21} demonstrated that the two problems are  $\mathcal{NP}$-hard. While for the unit sum-Hamming distance,  they developed two dynamic programming  algorithms  with time complexity $ O(n^4) $  and $ O(n^4\log n) $ for the two problems.   Subsequently, with some local optimization, Yi et al. \cite{YiL22} improved upon these algorithm, achieving two reduced time complexity of  $ O(n^3) $  and $ O(n^3\log n) $ for the two  problem under unit sum-Hamming distance.

In this paper, we focus on the problems  (\textbf{MSPIT-UN}) and \textbf{(MCSPIT-UN)} under the unit cost assumption, where $c(v)=1,$ for all $v\in V$. These problems are denoted as (\textbf{MSPIT-UN$_u$}) and \textbf{(MCSPIT-UN$_u$)},respectively. We construct the models of the problems, analyze their properties, design optimization algorithms with time complexity analysis.

The paper is organized as follows.
In section 2, we first introduce some necessary definitions and structures. Then we propose a dynamic programming algorithm to solve the problem (\textbf{MSPIT-UN$_u$})  with time complexity $O(n^3)$.  Based on this problem and a binary method, in section 3,  we solve the problem (\textbf{MCSPIT-UN$_u$})  in $O(n^3\log n)$ time. In section 4,  numerical experments are conducted to show the effeciency of the two algorithms. In section 5, we draw a conclusion and put forward future research.

\section{Solve the problem (MSPIT-UN$_u$)}

According to model (\ref{eq-MSPIT-UN}), the problem  \textbf{(MSPIT-UN)} under unit cost\textbf{(MSPIT-UN$_u$)} can be formulated as the following form.
\begin{eqnarray}
&\max\limits_{S\subseteq V}&\min_{t_k\in{Y}}\bar{w}(P_k)\nonumber\\
 \textbf{(MSPIT-UN$_u$)}&s.t.& |S|\leq K,\label{eq-MSPIT-UNu}\\
&&\bar w(e)=\left\{\begin{array}{cc}
  u(e),   & v\in S,e\in A(v) \\
  w(e),  & otherwise
\end{array}\right..\nonumber
\end{eqnarray}

From model (\ref{eq-MSPIT-UNu}), it is evident that  the problem \textbf{(MSPIT-UN$_u$)} is aiming at  upgrading at most $K$ edges on a tree to maximize the shortest root-leaf distance of the tree.

In this section, based on model (\ref{eq-MSPIT-UNu}), we introduce several important definitions and a special data structure of left-subtree. Subsequently, we propose a dynamic programming algorithm with a time complexity of $O(n^3)$. Finally, we provide an illustrative example to demonstrate the execution of the algorithm.
\subsection{Some important definitions}
In this subection,  we introduce several important definitions and a special data structure of left $q$-subtree.
\begin{defn}\cite{ZhangQ20}\label{defn-tab}
	For $e_j=(v_i, v_j)$, we call $v_i$  the \textbf{father} of $v_j$, denoted by $father(v_j)=v_i$. Define $Layer(v_1):=1$ and the \textbf{Layer} of any other nodes $v\in {V\setminus \{v_1\}}$ as
	$$Layer(v):=\left\{
	\begin{array}{lr}
	Layer(father(v)), & \text{if }\ deg(v)\leq{2}\\
	Layer(father(v))+1, & \text{if }\ deg(v)>2
	\end{array} \right..$$
\end{defn}

\begin{defn}\cite{ZhangQ20}\label{defn-LN-edge}
	For each edge $e_j=(v_i,v_j)\in E$ with $Layer(v_i)\leq Layer(v_j),$ we define $LN(e_j):=Layer(v_i)$ as the \textbf{layer number} of the edge $e_j$.
\end{defn}
As is shown in \textbf{Figure \ref{LNCCV}}, $Layer(v_1): = 1$, $degree(v_2) := 3 > 2$, $father(v_2): = v_1$, thus,
$Layer(v_2): = 1 + 1 = 2$; $degree(v_5): = 2$, $father(v_5):= v_1$, thus, $Layer(v_5): = 1$. For edge $e_2: =
(v_1, v_2),$ $ Layer(v_1) \leq Layer(v_2)$, so $LN(e_2): = Layer(v_1) = 1$. 

For convenience, denote by $V^*: = \{v \in V\backslash\{v_1\}|degree $ $(v) > 2\}$ the set of nodes
whose degrees are more than 2.

\begin{defn}\cite{ZhangQ20}
	For a  node $\bar v\in V^*\cup\{v_1\}$, we define a set $CD(\bar v)$ of \textbf{critical descendant}.
	Let $\bar v$ be on the path from $v_1$ to $v\in V^*\setminus\{v_1\}\cup Y$. If $v\in V^*\setminus\{v_1\}$ and $Layer(v)=Layer(\bar v)+1$, then $v\in{CD(\bar v)}$; if $v\in Y$ and $Layer(v)=Layer(\bar v)$, then $v\in{CD(\bar v)}$. Correspondingly, if $v\in CD(\bar v)$,  we call $\bar v$ the \textbf{critical ancestor} of $v$, denoted by $CA(v):=\bar v$.
\end{defn}
For instance, in \textbf{Figure \ref{LNCCV}}, $CD(v_1) := \{v_2, v_6, v_7\}$  and $CA(v_2) := v_1$.
\begin{figure}
    \centering
\includegraphics[width=0.5\linewidth]{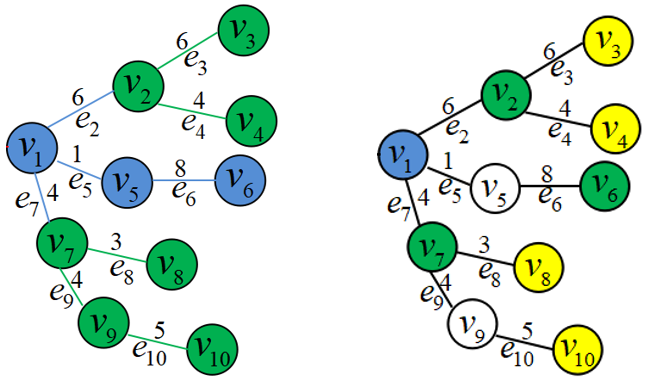}
    \caption{The edge-weighted trees $T_{v_1}$ with cost $c(e)$ on an edge $e$.  In the left tree, the Layers/layer numbers  of the blue nodes/edges are 1, and the Layers/layer numbers of the green nodes/edges are 2. In the right tree, the green nodes are the critical descendant of node $ v_1 $, and the paths are stored in green and yellow nodes.}
    \label{LNCCV}
\end{figure}

\begin{defn}\cite{ZhangQ20}\label{defn-chain}
	For any  node $v\in V^*\backslash\{v_1\}\cup Y$, define $\chi_v:=P_{CA(v),v}$ as the \textbf{chain} from $CA(v)$ to $v$. 
\end{defn}


For convenience, in the following parts of this paper, for any $v\in{V^*\cup\{v_1\}}$, let $CD(v)=\{v_{h_1}, v_{h_2}, \cdots, v_{h_{p}}\}$ be the set of critical children of the node $v$, where
\begin{eqnarray}p:=\left\{
\begin{array}{ll}
degree(v),&v=v_1\\
degree(v)-1,&v\in{V^*}\end{array}\right..\label{eq-p}\end{eqnarray}

\begin{lem}\cite{ZhangQ21}
Suppose $w'$ is an optimal solution of the problem ($\ref{eq-MSPIT-UNu}$). If there are two nodes $v_i,v_j$ on a same chain with $degree(v_i)=degree(v_j)=2$, $LN(A(v_i))=LN(A(v_j))$,
$\Delta{w(A(v_i))}<\Delta{w(A(v_j))}$, $w'(A(v_i))>w(A(v_i))$, $w'(A(v_j))=w(A(v_j))$, then $w^*$ is an optimal solution of the problem ($\ref{eq-MSPIT-UNu}$), where

$$ w^*(e)=\left\{
\begin{array}{ll}
w(e), &e=e_i\\
u(e), &e=e_j\\
w'(e),&otherwise
\end{array}\right..$$
\end{lem}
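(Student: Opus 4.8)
The plan is to show that $w^*$ is a feasible solution of (\ref{eq-MSPIT-UNu}) whose objective value is at least that of the optimal solution $w'$; since $w'$ is optimal, this forces $\min_{t_k\in Y}w^*(P_k)=\min_{t_k\in Y}w'(P_k)$, so $w^*$ is optimal too. Here $e_i$ (resp.\ $e_j$) denotes the edge whose forward node is $v_i$ (resp.\ $v_j$), so that $A(v_i)$ and $A(v_j)$ in the hypotheses are these single edges and upgrading the node $v_i$ amounts to upgrading the edge $e_i$. The structural fact behind the argument is this: since $degree(v_i)=degree(v_j)=2$ and $v_i,v_j\ne v_1$ lie on a common chain $\chi=\chi_v=P_{CA(v),v}$, both $v_i$ and $v_j$ are \emph{interior} vertices of $\chi$; hence $e_i$ and $e_j$ are edges of $\chi$, and every interior vertex of $\chi$ has degree exactly $2$.

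\emph{Feasibility.} Let $S'$ denote the upgraded node set of the optimal solution $w'$. From $w'(A(v_i))>w(A(v_i))$ we get $w'(e_i)=u(e_i)$, i.e.\ $v_i\in S'$; from $w'(A(v_j))=w(A(v_j))$ we get $v_j\notin S'$. Put $S^*:=(S'\setminus\{v_i\})\cup\{v_j\}$. Altering the membership of $v_i$ and $v_j$ affects only the lengths of $e_i$ and $e_j$, so one checks directly that $S^*$ induces exactly $w^*$, and $|S^*|=|S'|\le K$; hence $w^*$ is feasible.

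\emph{Objective.} I would compare $w'$ with $w^*$ on every root--leaf path $P_k$. The key claim is that each $P_k$ contains either all edges of $\chi$ or none of them: once a root--leaf path reaches an edge of $\chi$ it is forced to continue along $\chi$ to its terminal vertex $v$, because no interior vertex of $\chi$ is a branch point or a leaf; and a path avoiding $\chi$ uses none of its edges. In particular $e_i\in P_k\iff e_j\in P_k$. Hence if $\chi\not\subseteq P_k$ then $w^*$ and $w'$ agree along $P_k$ and $w^*(P_k)=w'(P_k)$, while if $\chi\subseteq P_k$ they agree on $P_k\setminus\{e_i,e_j\}$ and
\[
w^*(P_k)-w'(P_k)=\bigl(w(e_i)+u(e_j)\bigr)-\bigl(u(e_i)+w(e_j)\bigr)=\Delta w(A(v_j))-\Delta w(A(v_i))>0,
\]
using $\Delta w(A(v_i))<\Delta w(A(v_j))$. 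In both cases $w^*(P_k)\ge w'(P_k)$, so $\min_{t_k\in Y}w^*(P_k)\ge\min_{t_k\in Y}w'(P_k)$, which equals the optimal value; together with feasibility of $w^*$ this yields equality, so $w^*$ is optimal.

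\emph{Main obstacle.} The only step requiring real care is this all-or-nothing traversal property of $\chi$ --- equivalently, that $v_i$ and $v_j$ being interior degree-$2$ vertices of $\chi$ excludes any root--leaf path using $e_i$ but not $e_j$, the sole configuration in which the swap could shorten a path. This rests on Definitions~\ref{defn-tab}--\ref{defn-chain} together with the degree hypotheses; once it is established the rest is bookkeeping, since feasibility follows from the node-swap description and the path comparison only uses that $w'$ and $w^*$ differ precisely on $e_i$ and $e_j$. The remaining hypotheses $LN(A(v_i))=LN(A(v_j))$ and $w(e_i)<u(e_i)$ are automatically consistent with $v_i,v_j$ lying on a common chain and merely make the swap well defined and the gain strictly positive.
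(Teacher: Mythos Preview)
The paper does not supply its own proof of this lemma; it is quoted verbatim from \cite{ZhangQ21} and used without argument. Your exchange argument is the standard one and is correct: feasibility follows from $|S^*|=|S'|\le K$, and the objective comparison follows from the all-or-nothing traversal property of a chain (every interior vertex of $\chi$ has degree~$2$, so a root--leaf path uses all edges of $\chi$ or none), which yields $w^*(P_k)\ge w'(P_k)$ for every leaf $t_k$. The only delicate step you flag---that $e_i\in P_k\iff e_j\in P_k$---is exactly the right one, and your justification via Definitions~\ref{defn-tab}--\ref{defn-chain} is sound: interior vertices of $\chi_v=P_{CA(v),v}$ lie in $V\setminus(V^*\cup Y\cup\{v_1\})$, hence have degree exactly~$2$. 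One small caveat: the paper's notation is not fully consistent (the text calls $A(v_i)$ the set of \emph{all} edges adjacent to $v_i$, while the formula and the scalar usage $\Delta w(A(v_i))$ treat it as the single child-edge with forward node $v_i$); your reading---identifying $A(v_i)$ with the single edge $e_i$ so that upgrading $v_i$ upgrades exactly $e_i$---is the one that makes the lemma and its subsequent use in Algorithm~\ref{alg4} coherent, and matches the edge-upgrade formulation of \cite{ZhangQ21} from which the lemma is taken.
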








Based on the lemma above, without loss of generality, we can rearrange the edges with the same layer number on the same path such that their values of $\Delta w(e)$ are sorted non-increasingly. Notice that if the $K$ edges with the same layer number on the same path are upgraded, we can update the first $K$ edges in this layer of this path.


\begin{defn}\cite{ZhangQ21}
Define the left $q-$subtree rooted at $v$ as
$$
T_v^{1:q} =\bigcup\limits_{i=1}^{q}  T_v^{i:i}, \text{where~}   T_v^{i:i}:=\chi_{v_{h_i}} \cup T_v^{h_i}, i=1,2,\cdots,q, q=1,2,\dots,p 
$$ \text{and } $p$ \text{is defined as (\ref{eq-p}) }.  Specially, denote $ T_v^{1:p} $ as $ T_v $ and  let $ T_v^{1:0}:=\emptyset $. For any leaf node $v\in Y$, let $T_v^{1:q}:=\emptyset.$
\end{defn}

As illustrated in Fig. $\ref{lc1}$, the areas marked in red, blue, green correspond to the left 1-subtree $T_v^{1:1}$, the left 2-subtree $T_v^{1:2}$ and the left $q-$subtree $T^{1:q}_v$ of $T_v=T_v^{1:p}$, respectively.

\begin{figure}[!htbp]
\centering
\includegraphics[totalheight=1.2in]{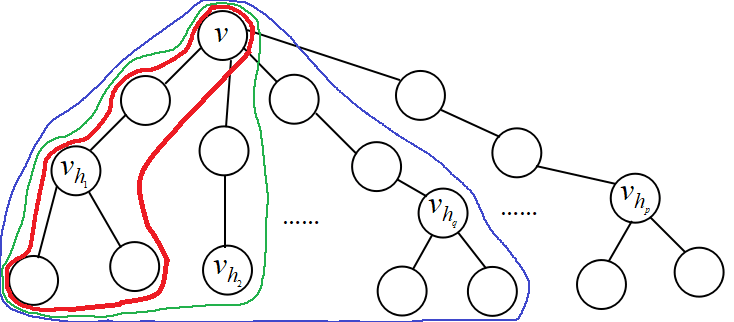}
\caption{The subtree $T^{1:p}_v$ is shown. The areas labeled in red,blue,green are the subtrees $T_v^{1:1},T_v^{1:2}$ and $T^{1:q}_v$, respectively.} \label{lc1}
\end{figure}
\subsection{Some auxiliary functions}
To describe the dynamic programming algorithm of the problem \textbf{(MSPIT-UN$_u$)}, we define the following functions.

\textbf{(1) The function $g(\chi_{v_{h}},\varepsilon,k)$ defined on the chain $\chi_{ v_h}$.}

For all $v\in{V^*}\cup\{v_1\}$ and every $v_h\in{CD(v)}$, let $\chi_{ v_h}:=\{e_{i_1},e_{i_2},$
$\cdots,e_{i_\beta}\}=\{v=v_{i_0},v_{i_1},v_{i_2},\cdots,v_h\}$ with $\Delta w(e_{i_1})\geq \Delta w(e_{i_2})\geq \cdots\geq\Delta w(e_{i_\beta})$, where $\beta=|V(\chi_{ v_h})|-1\geq 1$. Let  $g(\chi_{ v_h},\varepsilon,k)$ be  the sum of the edge lengths when the first $k$ nodes are upgraded on the chain $\chi_{ v_h}:=P_{v, v_{h}}$ with    $\varepsilon=0,1$, $k\geq \varepsilon$ and $0\leq k\leq \min\big\{\beta,K\big\},$  where $g(\chi_{ v_h},1,k)$ and  $g(\chi_{ v_h},0,k)$ represent the cases when  the node $v$ is and is not upgraded,respectively.  Let $V(\chi_{ v_h}, 0,k)$  and $V(\chi_{ v_h}, 1,k)$ be the corresponding sets of nodes which are upgraded on the chain $\chi_{ v_h}$, respectively.
 The values of $g(\chi_{ v_h},\varepsilon,k)$ can be calculated as follows.

\textbf{Case (1-1): }  when $k=0$, then $\varepsilon=0$.  
\begin{eqnarray}
  g(\chi_{ v_h},0,0):=\sum\limits_{j=1}^{\beta}w(e_{i_j}),    V(\chi_{ v_h},0,0)=\emptyset.\label{eq-g1v1}
\end{eqnarray}



\textbf{Case (1-2-1): }    when $K\geq k=\beta= 1$, then $\varepsilon=1$  and  $v$ is the only node can be upgraded. 
\begin{eqnarray}
  g(\chi_{ v_h},1,1)= w(e_{i_1})+ \Delta w(e_{i_1})=u(e_{i_1}),  \label{eq-g2}
\end{eqnarray}
\begin{eqnarray}
  V(\chi_{ v_h},1,1)=\{v\}.\label{eq-v2}
\end{eqnarray}

\textbf{Case (1-2-2): }    when $\beta\geq 2$, $1\leq k\leq \min\big\{\beta,K\big\},$ 
\begin{eqnarray}
  g(\chi_{ v_h},\varepsilon,k)=\left\{\begin{array}{cc}
   \sum\limits_{j=1}^{\beta}w(e_{i_j})+\sum\limits_{j=2}^{k+1} \Delta w(e_{i_j}),   &\varepsilon=0  \\
   \sum\limits_{j=1}^{\beta}w(e_{i_j})+\sum\limits_{j=1}^{k} \Delta w(e_{i_j}),   & \varepsilon=1
  \end{array}\right.,\label{eq-g3}
\end{eqnarray}
\begin{eqnarray}
  V(\chi_{ v_h},\varepsilon,k)=\left\{\begin{array}{cc}
  \bigcup\limits_{j=1}^{k}\{v_{i_j}\} ,   &\varepsilon=0  \\
  \bigcup\limits_{j=0}^{k-1}\{v_{i_j}\},   & \varepsilon=1
  \end{array}\right..\label{eq-v3}
\end{eqnarray}

Notice that we have considered all the chains on tree $T$.

\textbf{(2) The functions $F(T_v^{a:q},k)$ and $f(T_v^{a:q},\varepsilon,k)$ defined on the non-chain $T_v^{a:q}$.}

 We next discuss the two cases when $a=q$ and $a=1$. Specially, for any leaf node $v\in Y$, define $F(T_v,0):=F(\emptyset,0):=0$. 
 
\textbf{(2.1) When $a=q$, the functions $F(T_v^{q:q},k)$ and $f(T_v^{q:q},\varepsilon,k)$ defined on the non-chain $T_v^{q:q}$.}

For all $v\in{V^*}\cup\{v_1\}$, let  
 $F(T_v^{q:q},k)=\max\limits_{\varepsilon=0,1}f(T_v^{q:q},\varepsilon,k)$ be the maximum shortest root-leaf distance  of $T_v^{q:q}$ when $k$ nodes are upgraded with  $\varepsilon=0,1$, $k\geq \varepsilon$  ,  $0\leq{k}\leq\min\big\{{|V(T_v^{q:q})|-|Y\cap V(T_v^{q:q})|},K\big\} $ and $q=1,2,\cdots,p$, 
 where $f(T_v^{q:q},1,k)$ and $f(T_v^{q:q},0,k)$ represent the cases when  the node $v$ is and is not upgraded,respectively. Let  $V_v^{q:q}(k)$, $V_v^{q:q}(0,k)$ and $V_v^{q:q}(1,k)$  be the corresponding sets of edges which are upgraded on $T_v^{q:q}$, respectively.


\textbf{Case (1-1): } when  $k=0$,  then $\varepsilon=0$. 
\begin{eqnarray}
F(T_v^{q:q},0)=f(T_v^{q:q},0,0)=g(\chi_{v_{h_q}},0,0)+F(T_{v_{h_q}},0),\label{eq-f1}\end{eqnarray}
\begin{eqnarray}V_v^{q:q}(0)=V_v^{q:q}(0,0)=\emptyset.\label{eq-vf1}
\end{eqnarray}
Specially, when  $v_{h_q}$ is a leaf node, $T_{v_{h_q}}=\emptyset$ ,$F(T_{v_{h_q}},0)=0$ and then 
\begin{eqnarray}
&&F(T_v^{q:q},0)=f(T_v^{q:q},0,0)=g(\chi_{v_{h_q}},0,0),\label{eq-f2}\\
&&V_v^{q:q}(0)=V_v^{q:q}(0,0)=V(\chi_{v_{h_q}},0,0)=\emptyset.\label{eq-vf2}
\end{eqnarray}

\textbf{Case (1-2-1): }When $K\geq {k}={|V(T_v^{q:q})|-|Y\cap V(T_v^{q:q})|}$, then $\varepsilon=1$ as all nodes on $T_v^{q:q}$ are upgraded.
\begin{eqnarray}
&&F(T_v^{q:q},k)=f(T_v^{q:q},1,k)=\max\Big\{ g(\chi_{v_{h_q}},1,k_1)+F(T_{v_{h_q}},k_2)\Big\}\label{eq-f3},\\
&&s.t.~~~~k_1+k_2=k, \nn\\
&& k_1=1,2,\cdots, \min\big\{|V(\chi_{v_{h_q}})|-1,K\big\},\nn\\
&&k_2=0,1, 2, \cdots, \min\big\{|V(T_{v_{h_q}}|-|Y\cap V(T_{v_{h_q}})|,K\big\},\nn\\
&& V_v^{q:q}(k)=V_v^{q:q}(1,k).\label{eq-vf3}
\end{eqnarray}
Specially, when  $v_{h_q}$ is a leaf node, $T_{v_{h_q}}=\emptyset$ ,$F(T_{v_{h_q}},0)=0$ and then $k_2=0,$
\begin{eqnarray}
&&F(T_v^{q:q},k)=f(T_v^{q:q},1,k)=g(\chi_{v_{h_q}},1,k),\label{eq-f4}\\
&&V_v^{q:q}(k)=V_v^{q:q}(1,k)=V(\chi_{v_{h_q}},1,k).\label{eq-vf4}
\end{eqnarray}
\textbf{Case (1-2-2): }When 
$1\leq{k}\leq K<|V(T_v^{q:q})|-|Y\cap V(T_v^{q:q})|$,
\begin{eqnarray}
&&f(T_v^{q:q},\varepsilon,k)=\max\Big\{ g(\chi_{v_{h_q}},\varepsilon,k_1)+F(T_{v_{h_q}},k_2)\Big\},\label{eq-f5}\\
&&s.t.~~~~k_1+k_2=k, \nn\\
&&~~~~~~~~k=1,2,\cdots, \min\big\{|V(T_v^{q:q})|-|Y\cap V(T_v^{q:q})|,K\big\},\nn\\
&&~~~~~~~~\varepsilon\leq k_1=0,1,2,\cdots, \min\big\{|V(\chi_{v_{h_q}})|-1,K\big\},\nn\\
&&~~~~~~~~k_2=0,1, 2, \cdots, \min\big\{ |V(T_{v_{h_q}}|-|Y\cap V(T_{v_{h_q}})|,K\big\},\nn\\
&&F(T_v^{q:q},k)=\max\limits_{\varepsilon=0,1}f(T_v^{q:q},\varepsilon,k).\label{eq-F1}
\end{eqnarray}

If the optimal value $F(T_v^{q:q},k)=f(T_v^{q:q},\varepsilon^*,k)= g(\chi_{v_{h_q}},\varepsilon^*,k_1^*)+F(T_{v_{h_q}},k-k_1^*-\varepsilon^*)$ is obtained when $\varepsilon=\varepsilon^*,k_1=k_1^*$, then the set of upgraded nodes is 
\begin{eqnarray}&&V_v^{q:q}(k)=V_v^{q:q}(\varepsilon^*,k)=V(\chi_{v_{h_q}},\varepsilon^*,k_1^*)\cup V_{v_{h_q}}(k-k_1^*-\varepsilon^*).\label{eq-vf5}\end{eqnarray}
Specially,when $v_{h_q}$ is a leaf node, $E(T_{v_{h_q}})=\emptyset$ and $|E(T_{v_{h_q}})|=0$ and then  $k_2=0$, $F(T_{v_{h_q}},k_2)=F(T_{v_{h_q}},0)=0$. 
\begin{eqnarray}
&&f(T_v^{q:q},\varepsilon,k)=g(\chi_{v_{h_q}},\varepsilon,k),\label{eq-f6}\\
&&F(T_v^{q:q},k)=\max\limits_{\varepsilon=0,1}f(T_v^{q:q},\varepsilon,k).\label{eq-F2}
\end{eqnarray}
If the optimal value $F(T_v^{q:q},k)=f(T_v^{q:q},\varepsilon^*,k)= g(\chi_{v_{h_q}},\varepsilon^*,k)$ is obtained when $\varepsilon=\varepsilon^*$, then the set of upgraded nodes is 
\begin{eqnarray}V_v^{q:q}(k)=V_v^{q:q}(\varepsilon^*,k)=V(\chi_{v_{h_q}},\varepsilon^*,k).\label{eq-v6}
\end{eqnarray}

\textbf{(2.2) When $a=1$, the functions $F(T_v^{1:q},k)$ and $f(T_v^{1:q},\varepsilon,k)$ defined on the non-chain $T_v^{1:q}$}

For all $v\in{V^*}\cup\{v_1\}$, let  
 $F(T_v^{1:q},k)=\max\limits_{\varepsilon=0,1}f(T_v^{1:q},\varepsilon,k)$ be  the optimal value of $T_v^{1:q}$ when $k$ nodes are upgrade with  $\varepsilon=0,1$, $k\geq \varepsilon$  and  $0\leq{k}\leq\min\big\{ |V(T_v^{1:q})|-|Y\cap V(T_v^{1:q})|,K\big\}$, 
 where $f(T_v^{1:q},1,k)$ and $f(T_v^{1:q},0,k)$ represent the cases when  the node $v$ is and is not upgraded, respectively. Let $V_v^{1:q}(k)$, $V_v^{1:q}(0,k)$ and $V_v^{1:q}(1,k)$  be the corresponding sets of edges which are upgraded on $T_v^{1:q}$, respectively.
 Specially, denote $V_v^{1:p}(k)$ by $V_v(k)$ for simplicity.

\textbf{Case (1-1): } when  $k=0$,  then $\varepsilon=0$. 
\begin{eqnarray}
&&F(T_v^{1:q},0)=f(T_v^{1:q},0,0)=\min\Big\{ f(T_v^{q:q},0,0),f(T_v^{1:(q-1)},0,0)\Big\},\label{eq-f6-1}\\
&&V_v^{1:q}(0)=V_v^{1:q}(0,0)=\emptyset.\label{eq-v6-1}
\end{eqnarray}

\textbf{Case (1-2-1): }When $K\geq{k}={|V(T_v^{1:q})|-|Y\cap V(T_v^{1:q})|}$, then $\varepsilon=1$.
\begin{eqnarray}
&&F(T_v^{1:q},k)=f(T_v^{1:q},1,k)=\max\min\Big\{ f(T_v^{q:q},1,k_1),f(T_v^{1:(q-1)},1,k_2)\Big\},\label{eq-f7}\\
&s.t.&k_1+k_2-1=k, \nn\\
&&k_1=1,2,\cdots, \min\big\{|V(T_v^{q:q})|-|Y\cap V(T_v^{q:q})|,K\big\},\nn\\
&&k_2=1,2,\cdots, \min\big\{|V(T_v^{1:(q-1)})|-|Y\cap V(T_v^{1:(q-1)})|,K\big\},\nn\\
&&V_v^{1:q}(k)=V_v^{1:q}(1,k).\label{eq-v7}\end{eqnarray}
\textbf{Case (1-2-2): }When $1\leq{k}\leq K<|V(T_v^{1:q})|-|Y\cap V(T_v^{1:q})|,$
\begin{eqnarray}
&&f(T_v^{1:q},\varepsilon,k)=\max\min\Big\{ f(T_v^{q:q},\varepsilon,k_1),f(T_v^{1:(q-1)},\varepsilon,k_2)\Big\},\label{eq-f8}\\
&s.t.& k_1+k_2-\varepsilon=k, \nn\\
&&k=1,2,\cdots, \min\big\{|V(T_v^{1:q})|-|Y\cap V(T_v^{1:q})|,K\big\},\nn\\
&&\varepsilon\leq k_1=0,1,2,\cdots, \min\big\{|V(T_v^{q:q})|-|Y\cap V(T_v^{q:q})|,K\big\},\nn\\
&&\varepsilon\leq k_2=0,1,2,\cdots, \min\big\{|V(T_v^{1:(q-1)})|-|Y\cap V(T_v^{1:(q-1)})|,K\big\},\nn\\
&&F(T_v^{1:q},k)=\max\limits_{\varepsilon=0,1}f(T_v^{1:q},\varepsilon,k).\label{eq-F3}
\end{eqnarray}
Notice that the special case when $q=1$ with $T_v^{1:0}=\emptyset$ and $f(T_v^{1:(q-1)},\varepsilon,k_2)=0$ has been considered. Under these conditions,  $f(T_v^{1:1},\varepsilon,k)$  can be calculated by formulas (\ref{eq-f1}),(\ref{eq-f2}),(\ref{eq-f3}),(\ref{eq-f4}),(\ref{eq-f5}).

If the optimal value$$\begin{aligned} &F(T_v^{1:q},k)=f(T_v^{1:q},\varepsilon^*,k)=\max\min\Big\{ f(T_v^{q:q},\varepsilon^*,k_1^*),f(T_v^{1:(q-1)},\varepsilon^*,k-k_1^*)\Big\}\end{aligned}$$
 is obtained when $\varepsilon=\varepsilon^*,k_1=k_1^*$, then its set of upgraded nodes is
\eq V_v^{1:q}(k)=V_v^{q:q}(\varepsilon^*,k_1^*)\cup V_v^{1:(q-1)}(\varepsilon^*,k-k_1^*).\label{eq-v8}\nq




To sum up, travel the rooted tree from leaves to the root $v_1$ to calculate all the function values $g(\chi_{v_{h}},\varepsilon,k)$, $F(T_v^{1:q},k)$ and $f(T_v^{1:q},\varepsilon,k)$. Then $F(T_{v_1}, K)$ is the optimal value of the problem \textbf{(MSPIT-UN$_{u}$)}, $S:=V_{v_1}(K)$ is the set of upgraded nodes and an optimal solution is:
\eq \bar{w}(e)=\left\{
\begin{array}{ll}
u(e), &v\in S,e\in A(v),\\
w(e), &otherwise.\end{array}\right.\label{eq-optim-w}\nq

According to the analysis above, we have the following dynamic programming algorithm of the problem \textbf{(MSPIT-UN$_{u}$)}.
\begin{algorithm}
\caption{A dynamic programming algorithm:\\
$ [\bar{w},V_{v_1}(K),F(T_{v_1}, K)]:= $
\textbf{MSPIT-UN$_{u}$}$(T,V,E,Y,w,u,K).$}
\label{alg4}
\begin{algorithmic}[1]

\REQUIRE A tree $T(V,E)$ rooted at $v_1$, the set $Y$ of leaf nodes, two edge weight vectors $w$ and $u$ and the number $K$ of upgrade nodes.

\ENSURE An optimal solution $\bar{w}$ with the set  $V_{v_1}(K)$  of upgraded nodes  and the relative optimal value $F(T_{v_1}, K)$.

\STATE \textbf{(Breath-First Search (BFS).)} Let $V^*=\{v\in{V}|degree(v)>2\}$. Start from the root $v_1$ and label
each node $v$ by $Layer(v)$ when using the breath-first search strategy on $T$, where $Layer(v_1) = 1$. While
executing the BFS, calculate $Layer(v)$ for each node $v$ and $LN(e)$ for each edge $e$. Find the sets $CD(v)$
of critical children for each ${v}\in{V^*\cup\{v_1\}}$.

\STATE for each $v\in V^*\backslash{v_1}\cup Y$,  rearrange the nodes $v_i\in \chi_v$  whose degree are 2 on $\chi_v$ with the same layer number in the same path such that their values of $\Delta w(A(v_i))$ are in descending order.

\FOR {any ${v}\in{V^*\cup\{v_1\}}$}
\FOR {$v_h\in{CD(v)}$}
\STATE Calculate the value  $g(\chi_{v_h},\varepsilon,k)$ and the set $V(\chi_{v_h},\varepsilon,k)$ of upgraded nodes by (\ref{eq-g1v1}), (\ref{eq-g2}), (\ref{eq-v2}), (\ref{eq-g3}) and (\ref{eq-v3}) , for each $k=0,1,\cdots,\min\Big\{K,|V(\chi_{v_h})|-1\Big\}$.
\ENDFOR
\ENDFOR
\FOR { any ${v}\in{V^*\cup\{v_1\}}$ in descending order of the labels $Layer(v)$ of nodes}
\FOR {$q=1:p$}
\FOR {$k=0:\min\Big\{K,|V(T_v^{1:q})|-|Y\cap V(T_v^{1:q})|\Big\}$, }
 \STATE Calculate the value $f(T_v^{q:q},\varepsilon,k)$ and $F(T_v^{q:q},k)$ and their sets of upgrade nodes by (\ref{eq-f1})-(\ref{eq-v6}).
\STATE  Calculate the value $f(T_v^{1:q},\varepsilon,k)$ and $F(T_v^{1:q},k)$ and their sets of upgrade nodes by (\ref{eq-f6-1})-(\ref{eq-v8}).
\ENDFOR
\ENDFOR
\ENDFOR

\STATE  For the root $v_1$, calculate the optimal value $F(T_{v_1}, K)$, the relative set of upgraded nodes $V_{v_1}(K)$ and an optimal solution $\bar w$ obtained from (\ref{eq-optim-w}).
\end{algorithmic}
\end{algorithm}



\begin{thm}
The problem \textbf{(MSPIT-UN$_u$)} can be solved in $O(n^3)$ time by a dynamic programming algorithm in \textbf{Algorithm \ref{alg4}}.
\end{thm}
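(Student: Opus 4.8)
The plan is to prove two things: that \textbf{Algorithm \ref{alg4}} returns an optimal solution of \textbf{(MSPIT-UN$_u$)}, and that it runs in $O(n^3)$ time. I would establish correctness by a bottom-up structural induction along the chain/layer decomposition of $T$ (with the rearrangement \textbf{Lemma} above as the one non-trivial exchange step), and then bound the cost by a direct count of the work done in the three nested loops.

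\emph{Correctness.} First I would record the monotonicity observation that upgrading one more node never shortens any root-leaf path, so each of $g(\chi_{v_h},\varepsilon,k)$, $f(T_v^{a:q},\varepsilon,k)$, $F(T_v^{a:q},k)$ is non-decreasing in $k$; this legitimizes using $k=K$ at the root. For the base case (a single chain) I would check formulas (\ref{eq-g1v1})-(\ref{eq-v3}): since all edges of a chain carry the same layer number, the rearrangement \textbf{Lemma} yields an optimal solution in which the edges upgraded on the chain are exactly those with the largest values of $\Delta w$; the flag $\varepsilon$ merely records whether the single upgrade of the head node $v$ --- which lengthens the first edge of \emph{every} chain leaving $v$, hence this chain's first edge ``for free'' --- has already been spent, and this is precisely the distinction between the two sums in (\ref{eq-g3}). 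Leaf nodes supply the trivial base $F(\emptyset,0)=0$.

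\emph{Inductive step.} Next I would push the induction through the two gluings underlying the recursions. In $T_v^{q:q}=\chi_{v_{h_q}}\cup T_{v_{h_q}}$ the only shared vertex is $v_{h_q}$, whose upgrade affects only edges inside $T_{v_{h_q}}$; hence the controlling vertex sets of the two pieces are disjoint, every root-leaf path of $T_v^{q:q}$ is $\chi_{v_{h_q}}$ followed by a root-leaf path of $T_{v_{h_q}}$, and (\ref{eq-f3})-(\ref{eq-F2}) correctly return the best shortest root-leaf length via the split $k_1+k_2=k$. In $T_v^{1:q}=T_v^{q:q}\cup T_v^{1:(q-1)}$ the only shared vertex is the head $v$; the leaves of $T_v^{1:q}$ split disjointly between the two pieces, so the shortest root-leaf length is the \emph{minimum} of the two lengths, while any choice of $k$ upgrades of $T_v^{1:q}$ restricts to $k_1$ upgrades of $T_v^{q:q}$ and $k_2$ of $T_v^{1:(q-1)}$ with $k_1+k_2-\varepsilon=k$, the $-\varepsilon$ exactly cancelling the double count of $v$ when it is upgraded --- this is (\ref{eq-f6-1})-(\ref{eq-v8}). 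Maximizing over the admissible pairs $(\varepsilon,k_1)$ and applying the induction hypothesis to $T_{v_{h_q}}$ and to $T_v^{1:(q-1)}$ shows $F(T_v^{1:q},k)$ is the optimum over $T_v^{1:q}$ with at most $k$ upgrades; taking $v=v_1$, $q=p$, $k=K$ and reading off the stored $\varepsilon^*$ and $k_1^*$ yields an optimal $S=V_{v_1}(K)$, and hence the optimality of $\bar w$ from (\ref{eq-optim-w}).

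\emph{Complexity, and the main obstacle.} The BFS of Step~1, together with all $Layer$, $LN$ and $CD$ values, costs $O(n)$; Step~2 sorts the $\Delta w$ values within each chain, and as the chains edge-partition $T$ this is $O(n\log n)$ in total, after which every value $g(\chi_{v_h},\varepsilon,k)$ is obtained in $O(n)$ additional time by prefix sums. In the main loop the pairs $(v,v_{h_q})$ with $v\in V^*\cup\{v_1\}$ and $v_{h_q}\in CD(v)$ number $\sum_{v}|CD(v)|\le n$; for each such pair and each of the $O(\min\{K,n\})=O(n)$ relevant values of $k$, evaluating (\ref{eq-f5}) or (\ref{eq-f8}) is a maximum over $O(n)$ admissible values of $k_1$, i.e.\ $O(n)$ work, and maintaining the corresponding vertex sets stays within this bound (or is done once at the end in $O(n)$ by back-tracking). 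Hence the main loop costs $O(n)\cdot O(n)\cdot O(n)=O(n^3)$, which dominates every other step and gives the claimed bound. I expect the running-time count to be routine; the substantive difficulty is the inductive correctness step, and within it the bookkeeping of the shared head node $v$ --- verifying that the flag $\varepsilon$ together with the correction $k_1+k_2-\varepsilon=k$ neither omits nor double-counts the upgrade of $v$ across the several chains and subtrees hanging at $v$, and that the ``minimum of the shortest root-leaf lengths of the pieces'' genuinely equals the shortest root-leaf length of $T_v^{1:q}$ (which rests on the facts that each chain lies within a single layer and that $v$ is the unique vertex shared by the glued pieces).
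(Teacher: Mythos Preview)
Your proposal is correct and follows essentially the same approach as the paper's proof: the complexity count (BFS in $O(n)$, chain sorting in $O(n\log n)$, $g$-values via prefix sums in $O(n)$, and the main loop bounded by $O(n)$ subtree pairs times $O(K)$ values of $k$ times $O(K)$ splits, giving $O(nK^2)\le O(n^3)$) matches the paper's analysis almost line for line. The only difference is that you supply an explicit inductive correctness argument, whereas the paper's proof treats correctness as established by the DP construction in Section~2.2 and devotes the theorem's proof solely to the running-time bound.
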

\begin{proof}
In Algorithm \ref{alg4}, labeling the tree $T$ by breadth first search and determining the critical descendants for any $v\in V^*\cup\{v_1\}$ in \textbf{Line 1}  can be completed in $O(n)$ time.

The calculation of the value of $\Delta w(e)$ in Line 2 can be completed in $O(n)$ time.  Rearranging the edges with the same layer number in the same path such that their values of $\Delta w(e)$ are in descending order can be finished in time  $O(n\log n)$. Thus, the total time of \textbf{Line 2 } is $O(n\log n)$.



In \textbf{Lines 3-7}, on the one hand, for a given chain $\chi_{v_{h}}$, $g(\chi_{v_{h}},\varepsilon,k)$ can be obtained  from fomulas (\ref{eq-g1v1}),(\ref{eq-g2}) and (\ref{eq-g3}) in $O(|\chi_{v_{h}}|)$  at most. On the other hand, in fomula (\ref{eq-g3}) $$g(\chi_{v_h},0,k+1)= g(\chi_{v_h},0, k)+\Delta w(e_{i_{k+2}})  \text{~and~ } g(\chi_{v_h},1,k+1)= g(\chi_{v_h},1, k)+\Delta w(e_{i_{k+1}})$$ holds. Thus, $g(\chi_{v_{h}},\varepsilon,k)$ for any $k=0, 1, \cdots, \min\Big\{K,|V(\chi_{v_h})|-1\Big\}$ all can be calculated in $O(|\chi_{v_{h}}|)$. Additionally, for all $v\in V^*\cup\{v_1\}$ and  the relevant $v_h\in CD(v)$, calculating every $g(\chi_{v_{h}},\varepsilon,k)$ is just travelling the edges in every chain. Hence, the total time of \textbf{Lines 3-7} is $\sum_{\chi\subseteq T} O(|\chi|)=O(n).$


In \textbf{Lines 8-15},  the functions $F(T_v^{q:q},k)$ and $f(T_v^{q:q},\varepsilon,k)$ defined on the non-chain $T_v^{q:q}$ can be obtained from  formulas (\ref{eq-f1}), (\ref{eq-f2}), (\ref{eq-f3}), (\ref{eq-f4}), (\ref{eq-f5}), (\ref{eq-F1}), (\ref{eq-f6}), (\ref{eq-F2}).  In the most complex case in (\ref{eq-f5}), for a given non-chain $T_v^{q:q}$ and a value $k$,
since $k_1+k_2=k$, then $k_1, k_2\leq k$ and $k_2$ is obtained when $k_1$ is given which contains $O(k)$ kinds of possible combinations and for each pair of combination, there is only one addictive operation. Thus, for a given non-chain $T_v^{q:q}$ and a value $k$, $F(T_v^{q:q},k)$ and $f(T_v^{q:q},\varepsilon,k)$  can be solved in $O(k)$. Then for all $k=0, 1, \cdots, \min\Big\{K,|V(T_v^{1:q})|-|Y\cap V(T_v^{1:q})|\Big\}$, $F(T_v^{q:q},k)$ and $f(T_v^{q:q},\varepsilon,k)$ can be completed in $O(K^2)$ time. There are at most $O(n)$ non-chains, hence, all these functions can be obtained in $O(nK^2)$ time.

The functions $f(T_v^{1:q},\varepsilon,k)$ and $F(T_v^{1:q},k)$  defined on the non-chain $T_v^{1:q}$ can be obtained from  formulas (\ref{eq-f6-1}), (\ref{eq-f7}),  (\ref{eq-f8}),(\ref{eq-F3}).In the most complex case in (\ref{eq-f8}),  for a given non-chain $T_v^{1:q}$ and a value $k$,
since $k_1+k_2-\varepsilon=k$, then $k_1, k_2\leq k$ and $k_2$ is obtained when $k_1$ is given which contains $O(k)$ kinds of possible combinations and for each pair of combination, there is only one comparison. For all $k=0,1,2,\cdots, \min\big\{|V(T_v^{1:q})|-|Y\cap V(T_v^{1:q})|,K\big\}$, $f(T_v^{1:q},\varepsilon,k)$ and $F(T_v^{1:q},k)$ can be calculated in $O(K^2)$ time.
For any $v\in V^*\cup\{v_1\}$, $q=1,2 \cdots, deg(v)-1$, there are  $\sum_{v\in V^*\cup\{v_1\}}(deg(v)-1)\leq 2m-|V^*| \leq 2m $ subproblems like $f(T_v^{1:q},\varepsilon, k)$. Therefore, the total time complexity of \textbf{Lines 8-15} is $O(nK^2)$.

As a conclusion, the time complexity of Algorithm \ref{alg4} is $
 O(n K^2) \leq O(n^3).
$
\end{proof}

\section{Slove the problem \textbf{(MCSPIT-UN$_u$)}}
Now we consider a minimum cost shortest path interdiction problem by upgrading
nodes on trees under unit cost, which is similarly denoted by
\textbf{(MCSPIT-UN$_u$)}. It aims to minimize the total number of upgrade nodes on the premise
that the shortest root-leaf distance of the tree is lower bounded by a given value $D$.
\begin{eqnarray}
&\min\limits_{S\subseteq V}&|S|\nonumber\\
\textbf{(MCSPIT-UN$_u$)}&s.t.& \min_{t_k\in{Y}}\tilde {w}(P_k)\geq D,\label{eq-MCSPIT-UN}\\
&&\tilde w(e)=\left\{\begin{array}{cc}
  u(e),   & v\in S,e\in A(v) \\
   w(e),  & otherwise
\end{array}\right..\nonumber
\end{eqnarray}
For convenience, denote by \textbf{(MSPIT-UN$_u$(K))} and \textbf{(MCSPIT-UN$_u$(D))} the problem
\textbf{(MSPIT-UN$_u$)} with a given $K$ and the problem \textbf{(MCSPIT-UN$_u$)} with a given $D$, respectively.
The problem \textbf{(MSPIT-UN$_u$(K))} can be solved by \textbf{Algorithm 1} for a given $K$ in Sect.
2. In the problem \textbf{(MCSPIT-UN$_u$(D))}, we are searching for the smallest $K^*$ such that the problem \textbf{(MSPIT-UN$_u$($K^*$))} generates an upgrade vector $w^*$  with $\min_{t_k\in Y}w^*(P_k)\geq D$. Furthermore, we can obviously observe that for any $D'$ and $D''$ with $D'<D''$, the
number of upgrade nodes for the problem \textbf{(MCPIT-UN$_u$($D'$
))} is no more than that for \textbf{(MCPIT-UN$_u$($D''$))}.

To solve the problem \textbf{(MCSPIT-UN$_u$)}, we aim to find the optimal $K^*$ among the
values $\{1,2,\cdots,n\}$ by a binary search method, and in each iteration we solve
a problem \textbf{(MSPIT-UN$_u$(k))} by \textbf{Algorithm 1}, in which $k$ is the median of the current
interval $ [k_1, k_2] \subseteq [1, n]$. Hence, the problem \textbf{(MCSPIT-UN$_u$)} can be solved in
$O(n^3 \log n)$, as shown in Algorithm
2.

\begin{algorithm}
\caption{\\
$ [\tilde w,S,K^*]:=$ \textbf{MCSPIT-UN$_{u}$}$(T,V,E,Y,w,u,D).$}
\label{MCSPITUH}
  \begin{algorithmic}[1]
 \REQUIRE A tree $T(V,E)$ rooted at $v_1$, the set $Y$ of leaf nodes, two edge weight vectors $w$ and $u$ and a given value  $D$.

\ENSURE An optimal solution $\tilde {w}$ with the set $S$ of upgraded nodes    and the relative optimal value $K^*$.
  \STATE Initialization: $k_1:=1, k_2:=n,i:=1$
  \WHILE{$k_2\neq k_1+1$}
  \STATE Let $k:=\lceil\frac{k_1+k_2}{2}\rceil$
    \STATE Call $[\bar{w},\bar V,D^i]:= \textbf{MSPIT-UN$_{u}$}(T,V,E,Y,w,u,k)$
    \IF{$D^i<D$}
    \STATE Let $k_1:=k.$
    \ELSIF{$D^i>D$}
    \STATE Let $k_2:=k.$
    \ELSE
    \RETURN ($\bar w,\bar V,k$)
    \ENDIF
    \STATE
    Update $i:=i+1$
    \ENDWHILE
     \STATE  Call $[\bar{w},\bar V,D^i]:=$ \textbf{MSPIT-UN$_{u}$} $(T,V,E,Y,w,u,k_2)$
     
     \RETURN ($\bar w,\bar V,k_2$)
  \end{algorithmic}
\end{algorithm}
\section{Computational experiments}
\subsection{An example to show Algorithm \ref{alg4}.}
For a better understanding of Algorithm \ref{alg4}, \textbf{Example 1} is given to show the detailed computing process.

\textbf{Example 1} As is shown in Fig. \ref{LNCCV}, let $V=\{v_1,v_2,\cdots,v_{10}\}$, $E=\{e_2,\cdots,e_{10}\}$, $t_1=v_3$, $t_2=v_4$, $t_3=v_6$,$t_4=v_8$,$t_5=v_{10}$, $w=(6,6,4,8,1,4,3,4,5)$, $u(e_j)=10$ for all $j=1,2,\cdots,10$ and $K=1$.

\textbf{In Lines 1-2:}  $V^*\cup\{v_1\} := \{v_1, v_2, v_7\}$, and the sets of critical descendant are $CD(v_1) :=\{v_2, v_6, v_7\}, CD(v_2) = \{v_3, v_4\}, CD(v_7) := \{v_8, v_{10}\}$.

\textbf{In Lines 3-7:} For every $V^*\cup\{v_1\}$ and $v_h \in CD(v)$, calculate the value  $g(\chi_{v_h},\varepsilon,k)$ and the set $V(\chi_{v_h},k)$ of upgraded nodes by (\ref{eq-g1v1}), (\ref{eq-g2}), (\ref{eq-v2}), (\ref{eq-g3}) and (\ref{eq-v3}) , for each $k=0,1,\cdots,\min\Big\{K,|V(\chi_{v_h})|-1\Big\}$. The  values of $g(\chi_{v_h},\varepsilon,k)$
and the set $V(\chi_{v_h},k)$ of upgrade nodes are shown in \textbf{Table \ref{table-g}}.


 


 
 
 
 
 
 
 
 
 
 
 
 

\begin{table}[htbp!]
    \centering
    \caption{The value  $g(\chi_{v_h},\varepsilon,k)$ and the corresponding set $V(\chi_{v_h},\varepsilon,k)$. }
    \label{table-g} 
    \begin{tabular}{ccccc|ccccc}\hline 
       $\chi$&$ \varepsilon$&$k$&$g$$$&$V(g)$& $\chi$&$ \varepsilon$&$k$&$g$$$&$V(g)$\\
       \hline
$\chi_{v_3}$& $0$&$0$ & $7$&$\emptyset$&$\chi_{v_8}$& $0$&$0$ & $3$&$\emptyset$\\  \cline{2-5}\cline{7-10}
&$1$&$1$ & $10$&$\{v_2\}$&& $1$&$1$ & $10$&$\{v_7\}$\\  \hline
$\chi_{v_4}$& $0$&$0$ & $4$&$\emptyset$&$\chi_{v_{10}}$& $0$&$0$ & $9$&$\emptyset$\\  \cline{2-5}\cline{7-10}
&$1$&$1$ & $10$&$\{v_2\}$&& $0$&$1$ & $14$&$\{v_9\}$\\\cline{1-5}\cline{7-10}
$\chi_{v_2}$& $0$&$0$ & $6$&$\emptyset$&& $1$&$1$ & $15$&$\{v_7\}$\\  \cline{2-5}\cline{7-10}
&$1$&$1$ & $10$&$\{v_1\}$&& $1$&$2$ & $20$&$\{v_7,v_9\}$\\  \hline
$\chi_{v_6}$& $0$&$0$ & $9$&$\emptyset$&$\chi_{v_7}$& $0$&$0$ & $4$&$\emptyset$\\  \cline{2-5}\cline{7-10}
& $0$&$1$ & $11$&$\{v_5\}$&& $1$&$1$ & $10$&$\{v_1\}$\\  \cline{2-10}
&$1$&$1$ & $18$&$\{v_1\}$\\  \cline{2-5}
&$1$&$2$ & $20$&$\{v_1,v_5\}$\\  \hline
    \end{tabular}
\end{table}

\textbf{In Lines 8-15:} for { any ${v}\in{V^*\cup\{v_1\}}$ in descending order of the labels $Layer(v)$ of nodes}
and  {$k=0:\min\Big\{K,|V(T_v^{1:q})|-|Y\cap V(T_v^{1:q})|\Big\}$, }
calculate the value $f(T_v^{q:q},\varepsilon,k)$ and $F(T_v^{q:q},k)$ and their sets of upgrade nodes by 
 (\ref{eq-f1}), (\ref{eq-vf1}), (\ref{eq-f2}), (\ref{eq-vf2}), (\ref{eq-f3}), (\ref{eq-vf3}), (\ref{eq-f4}), (\ref{eq-vf4}), (\ref{eq-f5}), (\ref{eq-F1}), (\ref{eq-vf5}), (\ref{eq-f6}),(\ref{eq-F2}), (\ref{eq-v6}). 
Calculate the value $f(T_v^{1:q},\varepsilon,k)$ and $F(T_v^{1:q},k)$ and their sets of upgrade nodes by (\ref{eq-f6-1}),(\ref{eq-v6-1}), (\ref{eq-f7}), (\ref{eq-v7}), (\ref{eq-f8}),(\ref{eq-F3}), (\ref{eq-v8}). 

$V^*\cup\{v_1\} := \{v_1, v_2, v_7\}$.

  The values  $f(T_v^{a:q},\varepsilon,k)$  with the corresponding sets of upgrade nodes  are shown in \textbf{Table \ref{table-f}}, where the the bolded content indicates the corresponding function values of $F(T_v^{1:q},k)$ and their associated sets of upgrade nodes.


\begin{table}[htbp!]
    \centering
    \caption{The values  $f(T_v^{a:q},\varepsilon,k)$  with the corresponding sets of upgrade nodes. }
    \label{table-f} 
    \begin{tabular}{ccccc|ccccc}\hline 
       $T_{v}^{a:q}$&$ \varepsilon$&$k$&$f$&$V(T)$& $T_{v}^{a:q}$&$ \varepsilon$&$k$&$f$$$&$V(T)$\\
       \hline
$T_{v_2}^{1:1}$& $0$&$0$ & $\bm 4$&$\bm\emptyset$&$T_{v_1}^{1:1}$&$0$&$0$&$\bm7$&$\bm\emptyset$\\  \cline{2-5}\cline{7-10}

&$1$&$1$ & $\bm{10}$&$\bm{\{v_2\}}$&&$0$&$1$&$\bm{14}$&$\bm{\{v_7\}}$\\  \cline{1-5}\cline{7-10}
$T_{v_2}^{2:2}$& $0$&$0$ & $ 7$&$\emptyset$&&$1$&$1$&${13}$&${\{v_1\}}$\\  \cline{2-5}\cline{6-10}
&$1$&$1$ & ${10}$&${\{v_2\}}$&$T_{v_1}^{2:2}$&$0$&$0$&$9$&$\emptyset$\\  \cline{1-5}\cline{7-10}
$T_{v_7}^{1:1}$& $0$&$0$ & $\bm9$&$\bm\emptyset$&&$0$&$1$&$11$&$\{v_5\}$\\  \cline{2-5}\cline{7-10}
& $0$&$1$ & $14$&$\{v_9\}$&&$1$&$1$&${18}$&${\{v_1\}}$\\  \cline{2-5}\cline{6-10}
&$1$&$1$ & $\bm{15}$&$\bm{\{v_7\}}$&$T_{v_1}^{3:3}$&$0$&$0$&${10}$&$\emptyset$\\  \cline{1-5}\cline{7-10}
$T_{v_7}^{2:2}$& $0$&$0$ & $3$&$\emptyset$&&$0$&$1$&${16}$&${\{v_2\}}$\\  \cline{2-5}\cline{7-10}
&$1$&$1$ & ${10}$&${\{v_7\}}$&&$1$&$1$&$14$&$\{v_1\}$\\  \hline
$T_{v_2}^{1:2}$& $0$&$0$ & $\bm4$&$\bm\emptyset$&$T_{v_1}^{1:2}$&$0$&$0$&$7$&$\emptyset$\\  \cline{2-5}\cline{7-10}
& $1$&$1$ & $\bm{10}$&$\bm{\{v_2\}}$&&$0$&$1$&$7$&$\{v_7\}$\\  \cline{1-5}\cline{7-10}
$T_{v_7}^{1:2}$& $0$&$0$ & $\bm3$&$\bm\emptyset$&&$1$&$1$&$13$&$\{v_1\}$\\  \cline{2-5}\cline{6-10}
& $0$&$1$ & $3$&$\emptyset$&$T_{v_1}^{1:3}$&$0$&$1$&$9$&$\{v_7\}$\\  \cline{2-5}\cline{7-10}
 &$1$&$1$ & $\bm{10}$&$\bm{\{v_7\}}$&&$1$&$1$&$13$&$\{v_1\}$\\  \hline
    \end{tabular}
\end{table}

Consequently, \text{From ~(\ref{eq-F3}),(\ref{eq-v8}),}\begin{eqnarray*}&&F(T_{v_1},1)=\max\limits_{\varepsilon=0,1}f(T_{v_1},\varepsilon,1)=\max\{9,13\}=13, S:=V_{v_1}(1)=\{v_1\}.\end{eqnarray*}
   From (\ref{eq-optim-w}), an optimal solution is:
\begin{eqnarray*}
  \bar{w}(e)=\left\{
\begin{array}{ll}
u(e), &v\in S,e\in A(v),\\
w(e), &otherwise.\end{array}\right..  
\end{eqnarray*} 

\subsection{Numerical experiments} 

To evaluate the performance of Algorithms~\ref{alg4} and~\ref{MCSPITUH}, we conducted numerical experiments using MATLAB 2025a on a Windows 11 system with an Intel Core i7-10875H CPU (2.30 GHz). Six randomly generated tree instances, varying in size from 100 to 3,000 vertices, were used as test cases. For each tree, input data ($u$,  $w$) were generated randomly, respecting the constraints $0 \leq w \leq u$. The parameters $K$ and $D$ were randomly chosen, scaling with the tree size $n$. The numerical performance results are presented in Table~\ref{table:ex}.

The table reports the average ($ T_i $), maximum ($ T_i^{\text{max}} $), and minimum ($ T_i^{\text{min}} $) CPU execution times for $ i = 1, 2 $, corresponding to Algorithms~\ref{alg4} and~\ref{MCSPITUH}, respectively.  
The results indicate that both algorithms exhibit efficient performance on large-scale trees, consistent with their theoretical time complexity. Notably, Algorithm~\ref{MCSPITUH} incurs longer CPU times than Algorithm~\ref{alg4}, as expected from its iterative structure: the while-loop in Algorithm~\ref{MCSPITUH} necessitates repeated calls to Algorithm~\ref{alg4}, thereby accumulating computational overhead.

\begin{table}[htbp!]
    \centering
    \caption{Performance of Algorithms \ref{alg4} and \ref{MCSPITUH}.}
    \label{table:ex} 
    \begin{tabular}{ccccccc}\toprule   
        \textbf{Complexity} & \textbf{$ n $} & \textbf{100} & \textbf{500} & \textbf{1000} & \textbf{2000} & \textbf{3000} \\ 
        \midrule
        $ O(n^3) $ & $ T_1 $ & 0.0017 & 0.2178 & 1.7427 & 13.9416 & 47.3142 \\
                         & $ T_1^{\text{max}} $ & 0.0031 & 0.4392 & 3.5134 & 25.1072 & 89.2314 \\
                         & $ T_1^{\text{min}} $ & 0.0003 & 0.0423 & 0.4184 & 2.5472 & 9.5106 \\
        \midrule    
        $ O(n^3 \log n) $ & $ T_2 $ & 0.0047 & 0.8426 & 7.0352 & 62.3792 & 220.5431 \\
                         & $ T_2^{\text{max}} $ & 0.0062 & 1.4413 & 9.8624 & 81.3716 & 292.9127 \\
                         & $ T_2^{\text{min}} $ & 0.0022 & 0.3586 & 3.2691 & 28.9880 & 130.1103 \\        
        \bottomrule
    \end{tabular}
\end{table}

\section{Conclusion and further research}

In this paper, we investigate the maximum shortest path interdiction problem by upgrading nodes on tree network under unit cost (MSPIT-UN$_u$). The objective is to upgrade a subset of nodes to maximize the length of the shortest root-leaf distance, given that the total upgrade cost is bounded by a predetermined value. We develop a dynamic programming algorithm with time complexity $O(n^3)$ to solve this problem efficiently.

Additionally, we address the related Minimum Cost variant (MCPIT-UN$_u$) and propose a binary search algorithm with time complexity $O(n^3\log n)$, where our dynamic programming algorithm is executed in each iteration to solve the corresponding MSPIT-UN$_u$ problem.

For future research, several promising directions can be explored. First, the MSPIT-UN problem can be generalized by considering variable cost vectors for node upgrades, rather than restricting to unit costs. Second, the problem could be extended to more complex network structures, such as series-parallel graphs or general graphs, to broaden its applicability. Finally, other network interdiction problems involving the upgrading of critical nodes, such as minimum spanning tree interdiction problems, present interesting avenues for investigation. These extensions would enhance both the theoretical understanding of the problem and expand its practical relevance in real-world scenarios.

\vskip 0.3cm

\section*{Declarations}

\textbf{Competing interests} The authors declare that they have no competing interest.

\end{document}